\DeclareSymbolFont{cyrillic}{T2A}{cmr}{m}{n}
\DeclareMathSymbol{\D}{\mathalpha}{cyrillic}{196}
\newtheorem{theorem}{Theorem}[section]
\newtheorem{lemma}[theorem]{Lemma}
\theoremstyle{definition}
\theoremstyle{remark}
\newtheorem{remark}[theorem]{Remark}
\numberwithin{equation}{section}
\newtheorem*{condition}{Condition}
\def\namedlabel#1#2{\begingroup
   #2%
 \def\@currentlabel{#2}%
   \phantomsection\label{#1}\endgroup
}
\def\R{\ensuremath{\mathbb R}}
\def\N{\ensuremath{\mathbb N}}
\def\I{\ensuremath{{\bf 1}}}
\def\e{{\ensuremath{\rm e}}}
\def\L{\ensuremath{\mathcal L}}
\def\p{\ensuremath{\mathbb P}}
\def\A{\ensuremath{A^{(q)}}}
\def\Y{\mathcal{Y}}
\def\a{\alpha}
\def\1{{\bf 1}}
\def\ie{{\em i.e.}, }
\def\dist{\ensuremath{\text{dist}}}
\def\TF{\mathcal{T}}
\def\eps{\varepsilon}
\numberwithin{equation}{section}
\begin{document}

\title[EVL for sequences of intermittent maps]{Extreme Value Laws for sequences of intermittent maps}



\author[A. C. M. Freitas]{Ana Cristina Moreira Freitas}
\address{Ana Cristina Moreira Freitas\\ Centro de Matem\'{a}tica \&
Faculdade de Economia da Universidade do Porto\\ Rua Dr. Roberto Frias \\
4200-464 Porto\\ Portugal} \email{\href{mailto:amoreira@fep.up.pt}{amoreira@fep.up.pt}}
\urladdr{\url{http://www.fep.up.pt/docentes/amoreira/}}

\author[J. M. Freitas]{Jorge Milhazes Freitas}
\address{Jorge Milhazes Freitas\\ Centro de Matem\'{a}tica \& Faculdade de Ci\^encias da Universidade do Porto\\ Rua do
Campo Alegre 687\\ 4169-007 Porto\\ Portugal}
\email{\href{mailto:jmfreita@fc.up.pt}{jmfreita@fc.up.pt}}
\urladdr{\url{http://www.fc.up.pt/pessoas/jmfreita/}}

\author[S. Vaienti]{Sandro Vaienti}
\address{Sandro Vaienti\\ Aix Marseille Universit\'e, CNRS, CPT, UMR 7332
\\ 13288 Marseille, France and
Universit\'e de Toulon, CNRS, CPT, UMR 7332, 83957 La Garde, France.}
\email{vaienti@cpt.univ-mrs.fr}
\urladdr{\url{http://www.cpt.univ-mrs.fr/~vaienti/}}

%
%
\keywords{Non-stationarity, Extreme Value Theory, Sequential Dynamical Systems, Intermittent maps} 

\subjclass[2010]{37A50, 60G70, 37B20, 37A25}
\date{\today}

%
%
\begin{abstract}
We study non-stationary stochastic processes arising from sequential dynamical systems built on maps with a neutral fixed points and prove the existence of Extreme Value Laws for such processes. We use an approach developed in \cite{FFV16}, where we  generalised the theory of extreme values for non-stationary stochastic processes, mostly  by weakening the uniform mixing condition that was previously  used in this setting. The present work is an extension of our previous results for concatenations of uniformly expanding maps obtained in \cite{FFV16}.  

\end{abstract}

\maketitle

\section{Introduction}

The erratic behaviour of chaotic dynamical systems motivated the use of probabilistic tools to study the statistical behaviour of such systems. The time evolution of chaotic systems gives rise to time series resulting from evaluating an observable function along the orbits of the system. 

The mixing features of the systems determine the dependence structure of the processes, leading, usually, to some sort of asymptotic independence that, often, allows to recover the behaviour of purely random, independent and identically distributed sequences of random variables.

The ergodic properties of the systems are tied to the existence of invariant measures, which endow the stochastic processes arising from such systems with stationarity. In some sense, the invariant measures, which usually have some physical significance, determine the system itself. However, sometimes the exact formula for the invariant measure is not accessible and one has to rely on reference measures with respect to which these processes are not stationary anymore.

Relaxing stationarity gives rise to non-autonomous dynamical systems for which the study of limit theorems is just at the beginning.
Here, we will focus on the particular problem of studying the existence of limiting Extreme Value Laws (EVL), which, as shown in \cite{FFT10}, is related to the occurrence of rare events and the study of Hitting and Return Time Statistics.

The study of the extremal properties of non-stationary stochastic processes was introduced by H\"usler  in \cite{H83,H86} and the theory was built up on this initial effort, which generalised Leadbetter's conditions and approach to deal with general stationary stochastic processes. This fact precluded its application in a dynamical setting. In \cite{FFV16}, the authors developed a more general theory, based on necessary adjustments to Leadbetter's conditions and a much more refined way of dealing with clustering, originally developed in \cite{FFT12, FFT15}, which, ultimately, allowed the application to non-autonomous dynamical systems.

We will use the theory established in \cite{FFV16} to study \emph{sequential dynamical systems} originated by the composition of intermittent maps.  Sequential dynamical systems were introduced by Berend and Bergelson \cite{BB84}, as a non-stationary system in which a  concatenation of maps is applied to a given point in the underlying space, and the probability is taken as  a conformal measure, which allows the use of the transfer operator (Perron-Fr\"obenius) as a useful tool to quantify the loss of memory of any prescribed initial observable. The theory of sequential systems was later developed  in the fundamental paper by Conze and Raugi \cite{CR07}, where a few limit theorems, in particular the Central Limit Theorem, were proved for concatenations of  one-dimensional dynamical systems, each possessing a transfer operator with a quasi-compact structure on a suitable Banach space. For the same systems and others, even in higher dimensions, the Almost Sure Invariance Principle was subsequently shown in \cite{HNTV17}.

Both papers \cite{CR07, HNTV17} dealt however with uniformly expanding maps, for which the transfer operators admits a spectral gap and the correlations decays exponentially. In a different direction, a class of sequential systems given by composition of non-uniformly expanding maps of  Pomeau-Manneville type was studied in \cite{AHNTV15},  by perturbing the slope at the indifferent fixed point $0.$ Polynomial decay of correlations was proved for particular classes of centred observables, which could also be interpreted as the decay of the iterates of the transfer operator on functions of zero (Lebesgue) average, and this fact is better known as {\em loss of memory.} In the successor paper \cite{NTV15}, a (non-stationary) central limit theorem was  shown for sums of centred observables and with respect to the Lebesgue measure.

We continue here the statistical analysis of these indifferent transformations by proving   the existence of limiting extreme value distributions under suitable normalisation for the threshold of the exceedances.

\section{Conditions for the existence of extreme value laws for non-stationary processes}


In this section, we revise the general theory developed in \cite{FFV16} in order to prove the existence of EVL for non-stationary processes, which is particularly suitable for application to  processes arising from non-autonomous systems. However, since in our application there is no clustering of exceedances, we simplify the exposition by adapting the general conditions and setting to this particular case of absence of clustering.

Let $X_0, X_1, \ldots$ be a stochastic process\index{random/stochastic process}, where each r.v. $X_i:\Y\to\R$ is defined on the measure space $(\Y,\mathcal B,\p)$. We assume that $\Y$  is a sequence space 
such that each possible realisation of the stochastic process\index{random/stochastic process} corresponds to a unique element of $\Y$ and there exists a measurable map $\TF:\Y\to\Y$, the time evolution map, which can be seen as the passage of one unit of time, so that
$X_{i-1}\circ \TF =X_{i}, \quad \mbox{for all $i\in\N$}.$
The $\sigma$-algebra $\mathcal B$ can also be seen as a product $\sigma$-algebra adapted to the $X_i$'s.
For the purpose of this paper, $X_0, X_1,\ldots$ is possibly non-stationary. Stationarity would mean that $\p$ is $\TF$-invariant. Note that $X_i=X_0\circ \TF_i$, for all $i\in\N_0$, where $\TF_i$ denotes the $i$-fold composition of $\TF$, with the convention that $\TF_0$ denotes the identity map on $\Y$. In the application  below to sequential dynamical systems, we will have that $\TF_i=T_i\circ \ldots\circ T_1$ will be the concatenation of $i$ possibly different transformations $T_1, \ldots, T_i$.

Each random variable $X_i$ has a marginal distribution function (d.f.) denoted by $F_i$, \ie $F_i(x)=\p(X_i\leq x)$. Note that the $F_i$, with $i\in\N_0$, may all be distinct from each other. For a d.f. $F$ we let $\bar F=1-F$. We define $u_{F_i}=\sup\{x:F_i(x)<1\}$ and let $F_i(u_{F_i}-):=\lim_{h\to 0^+} F_i(u_{F_i}-h) =1$ for all $i$.

Our main goal is to determine the limiting law of
$$\mathbf{P}_n=\p(X_0\leq u_{n,0},X_1\leq u_{n,1},\ldots,X_{n-1}\leq u_{n,n-1})$$
as $n\to\infty$, where $\{u_{n,i},i\leq n-1,n\geq 1\}$ is considered a real-valued boundary.
We assume throughout the paper that
\begin{equation}
\bar F_{\max}:=\max\{\bar F_i(u_{n,i}), i\leq n-1\}\to0 \mbox{ as }n\to\infty,
\label{Fmax}
\end{equation}
which is equivalent to
$u_{n,i}\to u_{F_i}\mbox{ as }n\to\infty,\mbox{ uniformly in } i.$
Let us denote $F^*_n:=\sum_{i=0}^{n-1}\bar F_i(u_{n,i}),$
and assume that  there is $\tau>0$ such that
\begin{equation}
F^*_n:=\sum_{i=0}^{n-1}\bar F_i(u_{n,i})\to\tau, \qquad \mbox{as $n\to\infty.$}
\label{F_n}
\end{equation}

In what follows, for every $A\in\mathcal B$, we denote the complement of $A$ as $A^c:=\mathcal Y\setminus A$.
Let $\mathbb A:=(A_0,A_1,\ldots)$ be a sequence of events such that $A_i\in \TF_i^{-1} \mathcal B$. For some $s,\ell\in \N_0$, we define
\begin{equation}
\label{eq:W-def}
\mathscr W_{s,\ell}(\mathbb A)=\bigcap_{i=s}^{s+\ell-1} A_i^c.
\end{equation}

We will write $\mathscr W_{s,\ell}^c(\mathbb A):=(\mathscr W_{s,\ell}(\mathbb A))^c$. We consider
$\mathbb A_n^{(0)}:=(A_{n,0}^{(0)},A_{n,1}^{(0)},\ldots),$ where the event $A_{n,i}^{(0)}$ is defined as
$A_{n,i}^{(0)}(u_{n,i}):=\{X_i>u_{n,i}\}$.

Now, we recall a mixing condition, introduced in \cite{FFV16}, which was specially designed for the application to the dynamical setting.

\begin{condition}[$\D_0(u_{n,i})$]\label{cond:D} We say that $\D_0(u_{n,i})$ holds for the sequence $X_0,X_1,\ldots$ if for every  $\ell,t,n\in\N$,
\begin{equation}\label{eq:D1}
\left|\p\left(\A_{n,i}\cap
 \mathscr W_{i+t,\ell}\left(\mathbb A_n^{(0)}\right) \right)-\p\left(\A_{n,i}\right)
  \p\left(\mathscr W_{i+t,\ell}\left(\mathbb A_n^{(0)}\right)\right)\right|\leq \gamma_i(n,t),
\end{equation}
where $\gamma_i(n,t)$ is decreasing in $t$ for each $n$ and each $i$ and there exists a sequence $(t_n^*)_{n\in\N}$ such that $t_n^* \bar F_{\max}\to0$ and
$\sum_{i=0}^{n-1}\gamma_i(n,t_n^*)\to0$ when $n\rightarrow\infty$.
\end{condition}

In order to prove the existence of a distributional limit for $\mathbf P_n$, in \cite{FFV16}, we used as usual a blocking argument that splits the data into $k_n$ blocks separated by time gaps of size larger than $t_n^*$, which are created by simply disregarding the observations in the time frame occupied by the gaps. The precise construction of the blocks is given in \cite[Section~2.2]{FFV16} but we briefly describe here some of the key properties of this construction.

In the stationary context, one takes blocks of equal size, which in particular means that the expected number of exceedances within each block is $n\p(X_0>u_n)/k_n\sim \tau/k_n$. Here the blocks may have different sizes, which we denote by $\ell_{n,1}, \ldots, \ell_{n, k_n}$ but, as in \cite{H83,H86}, these are chosen so that the expected number of exceedances is again $\sim\tau/k_n$.  Also, for $i=1,\ldots,k_n$, let $\L_{n,i}=\sum_{j=1}^{i}\ell_{n,j}$ and $\L_{n,0}=0$. See beginning of Section~2.2 of \cite{FFV16} for the precise definition of these quantities.

We recall now a condition that imposes some restrictions on the speed of recurrence within each block, which, in the present context, precludes the existence of clustering.

Consider the sequence $(t_n^*)_{n\in\N}$, given by condition  $\D_0(u_{n,i})$ and let $(k_n)_{n\in\N}$ be another sequence of integers such that
\begin{equation}
\label{eq:kn-sequence}
k_n\to\infty\quad \mbox{and}\quad  k_n t_n^* \bar F_{\max}\to0,\quad \text{as $n\rightarrow\infty$.}
\end{equation}

\begin{condition}[$\D'_0(u_{n,i})$]\label{cond:D'q} We say that $\D'_0(u_{n,i})$
holds for the sequence $X_0,X_1,X_2,\ldots$ if there exists a sequence $(k_n)_{n\in\N}$ satisfying \eqref{eq:kn-sequence} and such that
\begin{equation}
\label{eq:D'rho-un}
\lim_{n\rightarrow\infty}\sum_{i=1}^{k_n} \sum_{j=0}^{\ell_i-1} \sum_{r=j+1}^{\ell_i-1}\p(A^{(0)}_{\L_{i-1}+j}\cap A^{(0)}_{\L_{i-1}+r})=0.
\end{equation}
\end{condition}

Condition $\D'_0(u_{n,i})$ precludes the occurrence of clustering of exceedances.

The following is a corollary of  \cite[Theorem~2.4]{FFV16}, in the particular case of absence of clustering and which we will use below to obtain the existence of EVL.

\begin{theorem}
\label{thm:error-terms-general-SP-no-clustering}
Let $X_0, X_1, \ldots$ be a stationary stochastic process and suppose \eqref{Fmax} and \eqref{F_n} hold for some $\tau>0$. 
Assume that conditions $\D_0(u_{n,i})$ e $\D'_0(u_{n,i})$ are satisfied. Then
\begin{align*}
\lim_{n\to \infty}\mathbf P_n=e^{- \tau}.
\end{align*}
\end{theorem}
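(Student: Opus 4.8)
The plan is to derive the result as a direct specialization of \cite[Theorem~2.4]{FFV16}, so the real content is a bookkeeping argument showing that, in the absence of clustering, the general formula collapses to the pure exponential $e^{-\tau}$. First I would set up the block decomposition recalled just above the statement: partition $\{0,1,\ldots,n-1\}$ into $k_n$ blocks of lengths $\ell_{n,1},\ldots,\ell_{n,k_n}$, separated by gaps of length $t_n^*$ obtained by discarding the observations in the gaps, with the block lengths chosen so that the expected number of exceedances in block $i$ is $\sum_{j=0}^{\ell_i-1}\bar F_{\L_{i-1}+j}(u_{n,\L_{i-1}+j})\sim\tau/k_n$. The standard approximation step is then to show that $\mathbf P_n$ differs from the product $\prod_{i=1}^{k_n}\p\!\left(\mathscr W_{\L_{i-1},\ell_i}(\mathbb A_n^{(0)})\right)$ by an error that vanishes as $n\to\infty$: this is where condition $\D_0(u_{n,i})$ enters, being applied $k_n$ times to peel off one block at a time, producing a total error bounded by a sum of the mixing rates $\gamma_i(n,t_n^*)$ plus a term controlled by $k_n t_n^*\bar F_{\max}$ (from the discarded gap observations), both of which tend to $0$ by $\D_0$ and \eqref{eq:kn-sequence}.

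Next I would estimate each block probability $\p\!\left(\mathscr W_{\L_{i-1},\ell_i}(\mathbb A_n^{(0)})\right)=\p\!\left(\bigcap_{j=0}^{\ell_i-1}\{X_{\L_{i-1}+j}\leq u_{n,\L_{i-1}+j}\}\right)$. By inclusion–exclusion (Bonferroni), this lies between $1-\sum_{j=0}^{\ell_i-1}\bar F_{\L_{i-1}+j}(u_{n,\L_{i-1}+j})$ and $1-\sum_{j}\bar F_{\L_{i-1}+j}(u_{n,\L_{i-1}+j})+\sum_{j<r}\p(A^{(0)}_{\L_{i-1}+j}\cap A^{(0)}_{\L_{i-1}+r})$. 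Writing $b_{n,i}:=\sum_{j=0}^{\ell_i-1}\bar F_{\L_{i-1}+j}(u_{n,\L_{i-1}+j})$ (so that $\sum_{i=1}^{k_n}b_{n,i}=F_n^*\to\tau$ and $\max_i b_{n,i}\to 0$ since each block carries $\sim\tau/k_n$ of the mass), and $c_{n,i}:=\sum_{j=0}^{\ell_i-1}\sum_{r=j+1}^{\ell_i-1}\p(A^{(0)}_{\L_{i-1}+j}\cap A^{(0)}_{\L_{i-1}+r})$, condition $\D'_0(u_{n,i})$ gives exactly $\sum_{i=1}^{k_n}c_{n,i}\to 0$. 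Hence $\prod_{i=1}^{k_n}\p\!\left(\mathscr W_{\L_{i-1},\ell_i}\right)=\prod_{i=1}^{k_n}\bigl(1-b_{n,i}+O(c_{n,i})\bigr)$.

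To finish, take logarithms: $\log\prod_{i=1}^{k_n}(1-b_{n,i}+O(c_{n,i}))=\sum_{i=1}^{k_n}\log(1-b_{n,i}+O(c_{n,i}))$. Using $\log(1-x)=-x+O(x^2)$ valid uniformly since $\max_i b_{n,i}\to 0$, this equals $-\sum_{i=1}^{k_n}b_{n,i}+O\!\left(\sum_{i=1}^{k_n}c_{n,i}\right)+O\!\left(\sum_{i=1}^{k_n}b_{n,i}^2\right)$. The first sum converges to $\tau$ by \eqref{F_n}; the $c_{n,i}$-sum vanishes by $\D'_0(u_{n,i})$; and $\sum_{i=1}^{k_n}b_{n,i}^2\leq\bigl(\max_i b_{n,i}\bigr)\sum_{i=1}^{k_n}b_{n,i}\to 0\cdot\tau=0$. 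Therefore $\prod_{i=1}^{k_n}\p\!\left(\mathscr W_{\L_{i-1},\ell_i}\right)\to e^{-\tau}$, and combined with the first step $\mathbf P_n\to e^{-\tau}$.

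The main obstacle is not any single estimate but making the block construction and its error accounting airtight — in particular verifying that the gap-removal error is genuinely of order $k_n t_n^*\bar F_{\max}$ and that the $k_n$-fold telescoping application of $\D_0(u_{n,i})$ accumulates only $\sum_i\gamma_i(n,t_n^*)$; since all of this is already carried out in the proof of \cite[Theorem~2.4]{FFV16}, the honest approach here is to invoke that theorem directly and simply observe that its cluster-size parameters degenerate (only the ``$\theta=1$'', no-clustering branch survives under $\D'_0$), yielding the clean limit $e^{-\tau}$.
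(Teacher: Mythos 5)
Your proposal is correct and ends where the paper begins: the paper offers no proof of this statement at all, presenting it purely as a corollary of \cite[Theorem~2.4]{FFV16} specialised to the no-clustering case, which is precisely the move you make in your final paragraph. Your preceding blocking/Bonferroni sketch is a faithful reconstruction of how that cited theorem is actually proved, with the one caveat that condition $\D_0(u_{n,i})$ as stated controls the correlation between a \emph{single} exceedance event $A^{(0)}_{n,i}$ and a block of non-exceedances, so the telescoping must be done event by event rather than block by block --- which is exactly why the accumulated error is $\sum_{i=0}^{n-1}\gamma_i(n,t_n^*)$ over all indices and not just over the $k_n$ blocks.
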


\section{Sequential systems on intermittent maps: statement of the main result}

We consider maps with indifferent fixed points in the formulation proposed  in \cite{LSV99}. Namely, for $\alpha\in (0,1)$,
\begin{equation}
\label{eq:int-family}
T_\alpha(x)=\begin{cases} x(1+2^\alpha x^\alpha) & \text{ for } x\in [0, 1/2)\\
2x-1 & \text{ for } x\in [1/2, 1]\end{cases}
\end{equation}
and we concatenate them. For each $i\in\N$, let $T_i=T_{\alpha_i}$, with $\alpha_i\in(0,\alpha^*)$, where $\alpha^*=1/7$.

This countable sequence of maps $\{T_i\}_{i\in\N}$  allows us to define a {\em sequential dynamical system}. A {\em sequential orbit} of $x\in X$  will be defined by the concatenation
\begin{equation}\label{m}
\TF_n (x) :=T_n\circ\cdots\circ T_1 (x), \ n\ge 1.
\end{equation}
We  denote by $P_{j}$ the Perron-Fr\"obenius (transfer) operator associated to $T_{j}$ defined by the duality relation
$$
\int_X P_{j}f \ g\ dm\ = \ \int_X f\ g\circ T_{j} \ dm,\;\; \mbox{ for all } f\in L^1_m, \ g\in L^{\infty}_m.
$$
Note that here the transfer operator $P_{j}$ is defined with respect to the reference Lebesgue measure $m$.

Similarly to (\ref{m}), we define the composition of operators as
\begin{equation}\label{o}
\Pi_n:=P_n\circ\cdots\circ P_1, \ n\ge 1.
\end{equation}
It is easy to check that duality persists under concatenation, namely
\begin{multline*}
\int_X g  (\TF_n) \  f \ dm=\int_X g (T_n\circ\cdots\circ T_1) \ f\ dm=\\ \int_X g(\ P_n\circ\cdots\circ P_1f )\ dm\ =  \int_X g\ (\Pi_n f) \ dm.
\end{multline*}

 We note that this perturbation {\em by changing the slope} has also been considered for other interesting purposes. The first result, by Freitas and Todd \cite{FT} is about {\em statistical stability}, which establishes the continuity in $L^1$ of the densities of the absolutely continuous invariant measures when the parameter $\alpha$ changes. A strong achievement in this direction has been obtained, independently, by Baladi and Todd \cite{BT16}, Korepanov \cite{K16} and, more recently, Bahsoun and Saussol \cite{BS16}, with the proof of the differentiability of the function $\alpha \rightarrow \int \psi d\mu_{\alpha}$, where $\mu_{\alpha}$ is the absolutely continuous invariant measure for $T_{\alpha}$ and $\psi$ is a function in some $L^q$; we defer to those papers for the precise definition and for the differences among them. We just stress that as a consequence, it is possible to obtain linear response and, in particular, \cite{BT16} gives a
formula for the value of the derivative. 

Let us now focus on  the situation of our interest, namely the sequential or random composition of these kind of maps. Whenever a finite number of them are chosen in an i.i.d. way and with a position dependent probability distribution $\mathbb{P}$, the stochastic stability was proven by Duan \cite{DU}. Still in this framework and by considering the {\em annealed situation} where the statistics is insured by the direct product of $\mathbb{P}$ with the stationary measure, Bahsoun, Bose and Duan \cite{BBD} proved polynomial  decay of correlations, and successively Bahsoun and Bose \cite{BB} got a central limit theorem. The latter was successively generalized in the {\em quenched} case (with respect to the stationary measure and for almost all the realizations), by Nicol, Torok and Vaienti \cite{NTV15}; this paper contains also a proof of the central limit theorem for sequential systems and its results will be used again in the next section. Still in this context we also quote the paper by  Lepp\"anen and Stenlund \cite{LS16} where a few results on the continuity of the densities and their pushforward with respect to  the parameter $\alpha$ are proved.

We now turn to the context of extreme value analysis. Similarly to \cite{FFT10} (in the context of stationary deterministic systems), we consider that the time series $X_0, X_1,\ldots$ arises from these sequential systems simply by evaluating a given observable $\varphi:X\to\R\cup\{\pm\infty\}$ along the sequential orbits,
\begin{equation}
\label{eq:def-stat-stoch-proc-DS} X_n=\varphi\circ \TF_n,\quad \mbox{for
each } n\in {\mathbb N}.
\end{equation}
Note that, on the contrary to the setup in \cite{FFT10},   the stochastic process $X_0, X_1,\ldots$ defined in this way is not necessarily stationary, because $m$ is not an invariant measure for any of the $T_i$.

We assume that the r.v. $\varphi:X\to\R\cup\{\pm\infty\}$
achieves a global maximum at $\zeta\in [0,1]$ (we allow
$\varphi(\zeta)=+\infty$) being of following form:
\begin{equation}
\label{eq:observable-form}
\varphi(x)=g\big(\dist(x,\zeta)\big),
\end{equation} where $\zeta$ is a chosen point in the
phase space $[0,1]$ and the function $g:[0,+\infty)\rightarrow {\mathbb
R\cup\{+\infty\}}$ is such that $0$ is a global maximum ($g(0)$ may
be $+\infty$); $g$ is a strictly decreasing continuous bijection $g:V \to W$
in a neighbourhood $V$ of
$0$; and has one of the three types of behaviour described in equations (1.11), (1.12) and (1.13) of \cite{FFT10}, which are important to determine the type of EVL that applies under linear normalisation (see \cite[Remark~6]{F13a}).

We now choose {\em time-dependent levels} $u_{n,i}$  given by $m(X_i>u_{n,i})=\tau/n$, where $\tau\geq0$. Let $\delta_{n,i}=g^{-1}(u_{n,i})$ so that
\begin{equation}
\label{eq:uni-intermmitent}
m(X_i>u_{n,i})=\int \I_{(\zeta-\delta_{n,i},\zeta+\delta_{n,i})}\Pi_i(1)dm=\frac\tau n.
\end{equation}
Observe that $\delta_{n,0}=\frac\tau{2n}$ and, by Lemma~\ref{bound-Pi}, which appears below, for $n$ sufficiently large, we have that for some constants $0<c<C'$,
\begin{equation}
\label{eq:deltan-estimates}
\frac\tau{2C'n}\leq\delta_{n,i}\leq \frac\tau{2cn}.
\end{equation}

Note that this choice for the levels $u_{n,i}$ guarantees that condition \eqref{F_n} is trivially satisfied.

We are now in condition of stating and proving our main result.

\begin{theorem}
\label{thm:intermittent} Consider the family of maps given by \eqref{eq:int-family} and the sequential dynamics given by $\TF_n=T_n\circ\ldots\circ T_1$, where $T_i=T_{\alpha_i}$, with $\alpha_i\in(0,\alpha^*)$ and $\alpha^*=1/7$. Let $X_1, X_2,\ldots$ be defined by \eqref{eq:def-stat-stoch-proc-DS}, where the observable function $\varphi$, given by \eqref{eq:observable-form}, achieves a global maximum at a chosen $\zeta\in(0,1]$. For $m$-a.e. $\zeta\in(0,1]$, we may define the levels $(u_{n,i})_{n,i\in\N}$ such that \eqref{eq:uni-intermmitent} holds for some $\tau\geq0$, conditions $\D_0(U_{n,i})$ and $\D'_0(U_{n,i})$ hold and consequently:
$$
\lim_{n\to\infty}m(X_0\leq u_{n,0},X_1\leq u_{n,1},\ldots,X_{n-1}\leq u_{n,n-1})=\e^{-\tau}.
$$
\end{theorem}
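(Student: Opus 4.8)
The plan is to verify the hypotheses of Theorem~\ref{thm:error-terms-general-SP-no-clustering} for the process $X_0,X_1,\ldots$ arising from the sequential intermittent dynamics. Condition \eqref{F_n} is immediate from the choice of levels in \eqref{eq:uni-intermmitent}, since $F^*_n=\sum_{i=0}^{n-1}m(X_i>u_{n,i})=n\cdot\tau/n=\tau$, and \eqref{Fmax} follows from the estimate \eqref{eq:deltan-estimates}, which forces $\bar F_{\max}=O(1/n)\to0$. So the two substantive tasks are to establish the mixing condition $\D_0(u_{n,i})$ and the no-clustering condition $\D'_0(u_{n,i})$; for this I first need the a priori control on the densities $\Pi_i(1)$ promised in Lemma~\ref{bound-Pi}, namely uniform upper and lower bounds $c\le \Pi_i(1)\le C'$ away from the neutral fixed point (or integrable bounds near it), which comes from the LSV cone technique adapted to the sequential setting as in \cite{AHNTV15,NTV15}, using that all $\alpha_i<\alpha^*=1/7$.

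For $\D_0(u_{n,i})$, the plan is to use the transfer operator to decorrelate. Writing the event $\A_{n,i}=\{X_i>u_{n,i}\}$ and $\mathscr W_{i+t,\ell}$ as a function of the orbit from time $i+t$ onward, the difference in \eqref{eq:D1} is bounded, via the duality relation for $\Pi_n$, by a term measuring how far $\Pi_{i+t}(\I_{A_{n,i}}\cdot(\text{density}))$ has relaxed toward $\Pi_{i+t}(1)$ times the corresponding integral. The key input is the \emph{loss of memory} estimate for concatenations of LSV maps: for zero-average functions the iterates of the composed transfer operator decay polynomially, at rate roughly $n^{1-1/\alpha^*}$ (up to logarithmic factors), uniformly over admissible parameter sequences; this is exactly what \cite{AHNTV15} provides, and $\alpha^*=1/7$ is chosen so that the decay exponent is large enough (bigger than $1$) for the sums below to converge. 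One then sets $\gamma_i(n,t)$ to be (a constant times) $\bar F_i(u_{n,i})$ times this decay function of $t$; since $\bar F_i(u_{n,i})\le C''/n$, one gets $\sum_{i=0}^{n-1}\gamma_i(n,t)\le C''\cdot(\text{decay at }t)$, and choosing $t_n^*$ a suitable power of $n$ (e.g.\ $t_n^*=n^\beta$ with $0<\beta<1$ chosen so that $n^\beta/n\to0$ and $n^{\beta(1-1/\alpha^*)}\to0$) makes both $t_n^*\bar F_{\max}\to0$ and $\sum_i\gamma_i(n,t_n^*)\to0$.

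For $\D'_0(u_{n,i})$, the plan is to bound each joint probability $m(A^{(0)}_{a}\cap A^{(0)}_{b})$ for $b>a$, i.e.\ $m(X_a>u_{n,a},\,X_b>u_{n,b})$, again by pushing forward with the transfer operator: this probability equals $\int \I_{(\zeta-\delta_{n,b},\zeta+\delta_{n,b})}\,\Pi_{b-a}\big(\I_{(\zeta-\delta_{n,a},\zeta+\delta_{n,a})}\Pi_a(1)\big)\,dm$. Here the crucial point is that for $m$-a.e.\ $\zeta$ the target set does not sit on the neutral fixed point, so $\Pi_{b-a}$ applied to an indicator of a tiny interval of length $\sim 1/n$ is controlled in $L^\infty$ on $(\zeta-\delta_{n,b},\zeta+\delta_{n,b})$ with bounds that, summed over the $\ell_i$-spacing within a block, yield a total of order $(\sum_i \ell_i)\cdot(1/n)\cdot(1/n)$-type quantities plus short-time terms $b-a\le$ const near the diagonal handled directly by the regularity of $g$ and the fact that $\zeta$ is not periodic; the block structure with $k_n$ blocks of expected size $\sim\tau/k_n$ then gives a bound of order $\tau^2/k_n\to0$. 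The main obstacle is precisely this double-exceedance estimate: unlike the uniformly expanding case in \cite{FFV16}, the polynomial (rather than exponential) loss of memory is borderline, so one must track constants carefully and exploit that a full-measure set of targets $\zeta$ stays uniformly away from $0$, and handle separately the finitely many small-gap terms near the diagonal where decorrelation has not yet kicked in, using a non-recurrence/non-periodicity property of $\zeta$ that holds $m$-almost everywhere. Once both conditions are verified, Theorem~\ref{thm:error-terms-general-SP-no-clustering} yields $\mathbf P_n\to e^{-\tau}$, which is the assertion of Theorem~\ref{thm:intermittent}.
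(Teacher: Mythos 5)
Your overall architecture --- check \eqref{Fmax} and \eqref{F_n}, verify $\D_0(u_{n,i})$ and $\D'_0(u_{n,i})$, and invoke Theorem~\ref{thm:error-terms-general-SP-no-clustering} --- is exactly the paper's (it is essentially forced by the abstract result), but both verifications, as you describe them, have genuine gaps.

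For $\D_0(u_{n,i})$ you posit $\gamma_i(n,t)\lesssim \bar F_i(u_{n,i})\,\rho(t)$ with $\rho(t)=t^{1-1/\alpha^*}(\log t)^{1/\alpha^*}$, so that $\sum_{i}\gamma_i(n,t_n^*)\lesssim \tau\,\rho(t_n^*)\to0$ for \emph{any} $t_n^*\to\infty$. That would make the restriction $\alpha^*=1/7$ (indeed any restriction on $\alpha^*<1$) unnecessary, which is a sign the claimed estimate is too strong. The obstruction is that the loss-of-memory bound of Theorem~\ref{thm:decay} applies only to pairs of functions lying in the cone $\mathcal C_a$, and the relevant observable --- the centred $\I_{U_{n,i}}\Pi_i(1)$ --- is not in the cone. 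The paper must first mollify $\I_{U_{n,i}}$ into a $C^1$ function $\chi$ supported on a $\Delta$-enlargement of $U_{n,i}$, and then add correction terms $\lambda X\Pi_i(1)+\nu\Pi_i(1)+\delta$ to push the result into $\mathcal C_a$; since $\lambda,\nu,\delta$ are affine in $\|\chi\|_{C^1}=O(\Delta^{-1})$, the resulting bound is $\gamma_i(n,t)\asymp \Delta+\Delta^{-1}t^{1-1/\alpha^*}(\log t)^{1/\alpha^*}$ \emph{uniformly in $i$}, with no gain of a factor $1/n$ from the smallness of $U_{n,i}$. Summing over $i\leq n$ and optimising $\Delta$ against $t_n=n^\kappa$ is precisely what yields the constraint $(1-1/\alpha^*)\kappa+2+2\eta<0$, i.e.\ $\alpha^*<1/3$ from this step alone. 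Your argument skips the cone-membership issue entirely, and with it the whole quantitative content of this verification.

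For $\D'_0(u_{n,i})$ the long-gap terms ($n^\kappa\leq i\leq \tilde Cn^\beta$ within a block) are indeed handled by the same decay estimate, but the short-gap terms are not ``finitely many small-gap terms near the diagonal'': one must control all gaps $1\leq i\leq n^\kappa$, a number growing with $n$, and the required statement is the quantitative bound of Lemma~\ref{lem:Dlinha-firstestimate}, namely that $n$ times the sum over $i\leq n^\kappa$ of the double-exceedance probabilities tends to $0$. Mere non-periodicity of $\zeta$ gives no rate and cannot close this. The paper's mechanism is twofold: first a convexity/distortion estimate for the sequential branches (Lemma~\ref{lem:aux-Dlinha}) giving $m(\mathcal E_n(\eps))\leq C\eps^{1/(1+\alpha^*)}$ for the set of $\eps$-near-returns, and then Collet's Hardy--Littlewood maximal function plus Borel--Cantelli argument (Lemma~\ref{lem:Collet}) converting this global bound into a bound on $m\left(B_{j^{-\gamma}}(\zeta)\cap E_{j^\gamma}\right)$ valid for $m$-a.e.\ $\zeta$. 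It is the exponent $\varsigma=\frac1{1+\alpha^*}-\kappa(1+\xi)$ from this step, played against the constraints coming from the decay-of-correlations step, that produces the final threshold $\alpha^*\leq 1/7$; without supplying this quantitative a.e.\ recurrence estimate your verification of $\D'_0$ does not go through.
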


\begin{remark}
\label{rem:alpha}
We emphasise that this restriction on $\alpha$ ($\alpha<1/7$) is rather technical and is due to the use of the blocking argument and of decay of correlations, which is proved only on sufficiently regular Banach spaces of functions. We remark that the same techniques gave rise to similar restrictions on $\alpha$ even in the stationary setting, where the orbits are obtained by iterations of the same Liverani-Saussol-Vaienti map (see \cite[Section~3.4]{HNT12}). It is interesting to observe that the threshold value $\alpha<1/7$ is the same appearing in \cite{NTV15} in order to establish the central limit theorem for smooth observable.
\end{remark}

\section{Proof of the theorem}
By Theorem~\ref{thm:error-terms-general-SP-no-clustering}, to prove Theorem \ref{thm:intermittent} we only need to check conditions $\D_0(u_{n,i})$ and $\D'_0(u_{n,i})$.
\subsection{Verification of  $\D_0(u_{n,i})$}
\label{subsec:D0}
The intermittent map introduced above exhibits polynomial decay of correlations, which can be obtained by considering  decay of the $L^1$ norm of the concatenation of the Perron-Frobenius operators: this fact is also known as {\em loss  of memory}.  We will be interested in the  kind of correlations given
 in \cite[Proposition~4.3]{FFV16}, 
 which reads
\begin{align*}
D\!C&(\phi,\psi,i,t):=\int \phi\circ \TF_i\psi \circ \TF_{i+t}dm-\int \phi\circ \TF_i dm\int  \psi \circ \TF_{i+t} dm \\&=  \int\left(\psi-\int\psi \Pi_{i+t}(1)dm\right)\,P_{i+t}\ldots P_{i+1}\left(\Pi_i(1)\left(\phi-\int\phi \Pi_{i}(1)dm\right)\right).
\end{align*}
Let $\tilde\phi= \phi-\int\phi \Pi_{i}(1)dm$. Observe that $\int \Pi_i(1)\tilde \phi dm=0.$ This means that the observable function $\Pi_i(1)\tilde \phi\in \mathcal V_0$, where $\mathcal V_0$ is the set of functions with 0 integral that was defined in \cite[Lemma~2.12]{CR07}.\\ Now, contrary to what we did in the case of uniformly expanding maps, we will consider decay of the $L^1$ norm of the concatenation of the PF operators, namely we will consider, having set $\tilde\psi= \psi-\int\psi \Pi_{i}(1)dm:$

\begin{align}\label{DCC}
|D\!C(\phi,\psi,i,t)|&=\left|\int\tilde\psi\,P_{i+t}\ldots P_{i+1}\left(\Pi_i(1)\tilde\phi\right)dm\right|\\
	&\leq \|P_{i+t}\ldots P_{i+1}(\Pi_i(1)\tilde \phi)\|_{1}\ ||\psi||_{\infty}.
\end{align}

To deal with such correlations we apply the following result proved in \cite{AHNTV15}:
\begin{theorem}[\cite{AHNTV15}]\label{thm:decay}

  Suppose $\psi, \phi$ are in the cone ${\mathcal C}_a$ (see below), for some $a$ and with equal
  expectation $\int \phi dm= \int \psi dm$. Then for any $0<\alpha^*<1$ and for
  any sequence $T_{1},\cdots, T_{n}$, $n \ge 1$, of maps of
  Pomeau-Manneville type
  with $0 < \alpha_k\le \alpha^* < 1$, $k\in
  [1,n]$, we have
  \begin{equation}\label{DC}
    \int |\Pi_{n}(\phi)-\Pi_{n}(\psi)|dm \le C_{\alpha^*}
    (\|\phi\|_1+\|\psi\|_1)n^{-\frac{1}{\alpha}+1}(\log n)^{\frac{1}{\alpha}},
  \end{equation}
  where the constant $C_{\alpha^*}$ depends only on the map $T_{\alpha^*}$.

\end{theorem}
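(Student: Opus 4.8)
The plan is to prove the stated polynomial loss of memory through the Liverani--Saussol--Vaienti invariant cone, upgraded to the non-autonomous (sequential) setting by an argument whose rate is dictated by the worst exponent $\alpha^*$. First I would fix a cone $\mathcal C_a$ of admissible densities: nonnegative continuous functions on $(0,1]$ that are decreasing, with $x^{\alpha^*+1}f(x)$ increasing, and subject to the pointwise domination $f(x)\le a\,x^{-\alpha^*}\int f\,dm$. The crucial structural lemma is cone invariance: there exists $a=a(\alpha^*)$, depending only on $\alpha^*$, such that $P_k\,\mathcal C_a\subseteq\mathcal C_a$ for every $T_k=T_{\alpha_k}$ with $\alpha_k\in(0,\alpha^*]$. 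This is checked by the explicit two--branch computation for the transfer operator of the map in \eqref{eq:int-family}, taking $a$ large enough to absorb the contribution of the branch adjacent to the neutral fixed point. The essential point is that a \emph{single} $a$ works uniformly across the whole admissible family, which is exactly what renders an arbitrary concatenation $\Pi_n=P_n\circ\cdots\circ P_1$ tractable; consequently, if $\phi,\psi\in\mathcal C_a$, then all iterates $\Pi_n\phi$ and $\Pi_n\psi$ remain in $\mathcal C_a$.

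Next I would reduce the estimate to a mass--transport statement on the base. Normalising so that $\int\phi\,dm=\int\psi\,dm=1$, set $\eta_n:=\Pi_n\phi-\Pi_n\psi$; mass preservation gives $\int\eta_n\,dm=0$, so $\|\eta_n\|_1=2\big(1-\int\min\{\Pi_n\phi,\Pi_n\psi\}\,dm\big)$ is twice the uncoupled mass. Away from the neutral fixed point the cone forces the two densities to be comparable, and the expanding branch $2x-1$ spreads mass uniformly; hence at each passage through a fixed base region $[x_0,1]$ a definite proportion of the common mass is identified, while the residual (uncoupled) mass is progressively concentrated near $0$. The $L^1$ distance is therefore controlled by the proportion of mass that has \emph{not} yet reached the base by time $n$, i.e.\ by the integrated tail of the first--passage time to $[x_0,1]$.

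The rate then follows from the escape estimate near the fixed point. For $T_{\alpha_k}$ the recursion $x_{j+1}=x_j(1+2^{\alpha_k}x_j^{\alpha_k})$ shows that a point at radius $\delta$ needs of order $\delta^{-\alpha_k}$ iterates to leave a fixed neighbourhood of $0$; equivalently, in the worst case $\alpha_k=\alpha^*$ the mass not yet escaped after $n$ steps lies in $[0,\delta_n]$ with $\delta_n\sim n^{-1/\alpha^*}$. The cone domination then bounds the trapped mass by $\int_0^{\delta_n}a\,x^{-\alpha^*}\,dm\sim\delta_n^{1-\alpha^*}\sim n^{-1/\alpha^*+1}$, which is precisely the integrated return--time tail governing \eqref{DC}. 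Tracking the reinjection of mass through the time--varying composition $\Pi_n$, rather than through a single induced map as in the stationary case, produces the additional $(\log n)^{1/\alpha^*}$ correction. Combining this with the base coupling yields $\|\eta_n\|_1\le C_{\alpha^*}\,n^{-1/\alpha^*+1}(\log n)^{1/\alpha^*}$, and since the original densities need not be normalised the factor $\|\phi\|_1+\|\psi\|_1$ reappears, giving \eqref{DC}; all constants are inherited from $a(\alpha^*)$ and the worst--case escape tail, hence depend only on $T_{\alpha^*}$.

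The hard part is exactly this uniform escape/coupling estimate in the sequential regime: one cannot induce a single Gibbs--Markov system, so the first--passage tails must be controlled simultaneously for every admissible sequence $\{\alpha_k\}_k$, and the mass reinjected near $0$ at each step must be reabsorbed without degrading the polynomial rate. Pinning down the exponent together with the logarithmic correction, uniformly over all such sequences, is where the genuine work lies; this is the content of the estimate borrowed from \cite{AHNTV15}.
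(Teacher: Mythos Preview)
The paper does not give its own proof of this theorem: the statement is quoted verbatim from \cite{AHNTV15} and used as a black box in Section~\ref{subsec:D0}. Your sketch is a reasonable high-level outline of the argument actually carried out in \cite{AHNTV15}---cone invariance for a single $a=a(\alpha^*)$ uniform over the family, reduction to an $L^1$ coupling estimate via the common mass, and control of the uncoupled mass through the escape time from a neighbourhood of the neutral fixed point---so there is no methodological discrepancy to report. The only caveat is that your heuristic for the $(\log n)^{1/\alpha^*}$ factor (``tracking reinjection through the time-varying composition'') is vague; in \cite{AHNTV15} the logarithm arises from a specific dyadic decomposition of the time interval and a careful bookkeeping of mass reinjected near $0$ at intermediate times, and making that step rigorous is precisely the technical core you flag in your last paragraph.
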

The cone  $\mathcal{C}_a$ contains functions given by
(here $X(x)=x$ denotes the identity function):
\[
\mathcal{C}_a= \{f\in C^0((0,1])\cap L^1(m) \mid \ f\ge 0, \ f \
\mbox{decreasing},\ X^{\alpha+1}f \ \mbox{increasing}, \ f(x)\le a x^{-\alpha}\int f dm
\}
\]
Having fixed $0< \a < 1$, it was  proven in~\cite{AHNTV15} that, provided $a$ is large
enough, the cone $\mathcal{C}_a$ is preserved by all operators
$P_{k}$.\\

We are now ready  to verify  $\D_0(u_{n,i})$.  
Note that $A_{n,i}^{(0)}=\{X_i>u_{n,i}\}=:U_{n,i}$ is an interval.

We will apply the bound (\ref{DCC}). We begin to observe that  in our case $\phi$ is not in the cone $\mathcal{C}_{a}$; we therefore approximate it with a function $\chi$ which is $C^1$ and with compact support, equal to $1$ on $U_{n,i}$ and rapidly decreasing to zero on a set $\Lambda$ of diameter $\Delta$ in the complement of $U_{n,i}$. \footnote{This can be achieved for instance in this way. Let $U_n=(a_n,b_n)$ and $U_n^\Delta=(a_n-\Delta,b_n+\Delta)$.
Define
\begin{equation*}
\chi(x)=\begin{cases} 1 & \text{ for } x\in (a_n, b_n)\\
\e^{-\frac1{1-\left(\frac{x-b_n}{\Delta}\right)^2}} & \text{ for } x\in [b_n,b_n+\Delta)\\
\e^{-\frac1{1-\left(\frac{x-a_n}{\Delta}\right)^2}} & \text{ for } x\in (a_n-\Delta,a_n]\\
0 & \text{ for } x\in \R\setminus U_n^\Delta
\end{cases}.
\end{equation*}

Note that $\Delta U_n:=\{x:\;\chi(x)-\I_{U_n}(x)>0\}=U_n^{\Delta}\setminus [a_n,b_n]$ and $m(\Delta U_n)=2\Delta$. We have $\chi\in \mathcal C^\infty$, $\chi''(b_n+\frac\Delta{3^{1/4}})=0=\chi''(a_n-\frac\Delta{3^{1/4}})$ and
$$
\max\{\chi'(x)\}=\chi'(b_n+\frac\Delta{3^{1/4}})=\chi'(a_n-\frac\Delta{3^{1/4}})=\frac{2\e^{-\frac1{1-1/\sqrt3}}}{3^{1/4}(1-1/\sqrt3)^2}\frac1\Delta=O(1/\Delta).
$$.
} We have that $||\chi||_{\infty}=1$, $||\chi'||_{\infty}=O(\Delta^{-1})$ and finally $||\phi-\chi||_1=O(\Delta).$ 
In this way we have:
$$
\Pi_i(1)\tilde \phi=\Pi_i(1) \chi-\Pi_i(1)\int \chi \Pi_i(1)dm + \Pi_i(1)[\phi-\chi]-\Pi_i(1)\int [\phi-\chi]\Pi_i(1)dm.
$$
To this quantity we have to apply the power $\Pi_t:=P_{i+t}\ldots P_{i+1}$ and then take  the $L^1$ norm: for the  last two terms in the preceding identity this contribution will be of order $2\Delta.$ 
Now, generalizing an argument in  \cite{LSV99}, it can be shown, as in \cite{NTV15}, that there are constants $\lambda<0, \nu>0, \delta>0$ such that, having set $\chi':= \chi-\int \chi \Pi_i(1)dm,$ the functions
$$
F:=\chi'\Pi_i(1)+\lambda X \Pi_i(1)+\nu\Pi_i(1)+\delta; \ G:=\lambda X \Pi_i(1)+\nu\Pi_i(1)+\delta
$$
are {\em pushed into the cone} $\mathcal{C}_a$,  in such a way that
$$
\Pi_t(\Pi_i(1)\chi')=\Pi_t(F)-\Pi_t(G),
$$
and, by the above theorem on loss of memory,
$$
||\Pi_t(\Pi_i(1)\chi'||_1=||\Pi_t(F)-\Pi_t(G)||_1\le C_{\a^*}
    (\|F\|_1+\|G\|_1)t^{-\frac{1}{\a^*}+1}(\log t)^{\frac{1}{\a^*}}.
$$
It is important to notice that the constants $\lambda, \nu, \delta$
\begin{itemize}
\item are independent of $i$;
\item are affine functions of the $C^1$ norm of $\chi$, with multiplicative constants depending only on $\a^*.$

\end{itemize}
In conclusion, this means that we can write
$$
||\Pi_t(\Pi_i(1)\chi'||_1\le C_{\a^*}[A_{\a^*}||\chi||_{\infty}+B_{\a^*}||\chi'||_{\infty}+D_{\a^*}]t^{-\frac{1}{\a^*}+1}(\log t)^{\frac{1}{\a^*}},
$$
where the factors $A_{\a^*}, B_{\a^*}, D_{\a^*}$ depend only on $\a^*.$
Therefore, and taking into account the bounds on $\chi$, there will be new constants $C_1, C_2, C_3$ depending only on $\a^*$ such that
$$
||\Pi_t(\Pi_i(1)\tilde{\phi}||_1\le 2\Delta \ + C_1 t^{-\frac{1}{\a^*}+1}(\log t)^{\frac{1}{\a^*}}\ + C_2 \Delta^{-1} t^{-\frac{1}{\a^*}+1}(\log t)^{\frac{1}{\a^*}}\ + C_3 t^{-\frac{1}{\a^*}+1}(\log t)^{\frac{1}{\a^*}}.
$$
Returning to \eqref{DCC}, it follows that there exists $C^*$ (depending only on $\alpha^*$) such that
\begin{equation}
\label{DCC-final-estimate}
D\!C(\phi,\psi,i,t)\leq \left(2\Delta+C^*\Delta^{-1} t^{-\frac{1}{\a^*}+1}(\log t)^{\frac{1}{\a^*}}\right)\|\psi\|_\infty.
\end{equation}
In order to verify condition $\D_0({u_n,i})$, we let $\Delta=n^{1+\eta}$, for some $\eta>0$, $t_n=n^{\kappa}$, for some $0<\kappa<1$ and for each $n,i,\ell$ set $\phi_i=\I_{(\zeta-\delta_{n,i},\zeta+\delta_{n,i})}$ and $\psi_i=\I_{(\zeta-\delta_{n,i+t_n},\zeta+\delta_{n,i+t_n})}\cdot\ldots\cdot \I_{(\zeta-\delta_{n,i+t_n+\ell},\zeta+\delta_{n,i+t_n+\ell})}\circ(T_{i+t_n+\ell}\circ\ldots\circ T_{i+t_n+1})$. Then we can write:
$$
D\!C(\phi_i,\psi_i,i,t_n)\leq 2n^{-(1+\eta)}+C^*n^{1+\eta}n^{(-\frac1{\alpha^*}+1)\kappa}(\kappa\log n)^{\frac1{\alpha^*}}=:\gamma_i(n,t_n).
$$
Then, for some $C^{**}>0$, we have
$
\sum_{i=0}^{n-1}\gamma_i(n,t_n)\leq 2n^{-\eta}+C^{**}n^{2+2\eta}n^{(-\frac1{\alpha^*}+1)\kappa}\to0,\;\text{as $n\to\infty$},
$
as long as $\alpha$ is sufficiently small so that $(-\frac1{\alpha^*}+1)\kappa+2+2\eta<0$, which ultimately settles condition $\D_0(u_{n,i})$.

Note that in order to optimise the choice of the $\alpha^*$ (which we want as large as possible), we need to choose $\eta$ close to $0$ and $\kappa$ close to $1$, which means that $\alpha^*<\frac13$. However, in order to prove $\D'_0(u_{n,i})$ we still need further restrictions on $\alpha$.

\subsection{Verification of $\D'_0(u_{n,i})$}
\label{subsec:Dlinha0}

We will begin with a lemma that adjusts to the sequential setting the argument used in \cite[Lemma~3.10]{HNT12}. Essentially, it says that the Lebesgue measure of the points that after $n$ iterations by the sequential intermittent maps return to an $\eps$ neighbourhood of themselves scales like a power of $\eps$ that depends on $\alpha^*$. 

Let $\mathcal E_n(\eps):=\{x\in[0,1]:\;|\TF_n(x)-x|\leq \eps\}.$

\begin{lemma}
\label{lem:aux-Dlinha}
There exists some $C>0$ such that for all $n\in\N$, we have $$m(\mathcal E_n(\eps))\leq C
\eps^{1/(1+\alpha^*)}.$$
\end{lemma}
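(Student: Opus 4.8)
\textbf{Proof plan for Lemma~\ref{lem:aux-Dlinha}.}

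The plan is to decompose $[0,1]$ according to how close a point $x$ is to the neutral fixed point $0$, since the only mechanism by which $\TF_n(x)$ can be close to $x$ is when $x$ sits in the region where the dynamics is nearly the identity. Following the strategy of \cite[Lemma~3.10]{HNT12}, I would fix a small parameter $\rho = \rho(\eps)$, to be optimised at the end, and split $\mathcal E_n(\eps) = \bigl(\mathcal E_n(\eps)\cap[0,\rho)\bigr) \cup \bigl(\mathcal E_n(\eps)\cap[\rho,1]\bigr)$. The first piece is handled trivially: $m\bigl(\mathcal E_n(\eps)\cap[0,\rho)\bigr)\leq \rho$, so the real work is bounding the contribution away from the fixed point.

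For the second piece, the key geometric fact is an expansion estimate: away from a neighbourhood of $0$, every branch of every $T_{\alpha_k}$ expands by a definite factor, and more importantly $|\TF_n'(x)| - 1$ is bounded below whenever the orbit of $x$ stays outside $[0,\rho)$, or else the orbit must enter $[0,\rho)$, from which it escapes in a controlled number of steps with a controlled expansion. First I would recall the standard fact (uniform in the sequence $\{\alpha_k\}$ because $\alpha_k\leq\alpha^*$) that for $x\in[\rho,1]$ the derivative along any concatenation satisfies a lower bound of the form $|\TF_n'(x)|\geq 1 + c\,\rho^{\alpha^*}$ on each interval of monotonicity that does not meet $[0,\rho)$ — this is exactly the ``Dynamical Borel--Cantelli''-type estimate from \cite{LSV99,HNT12} adapted to the sequential case, where the worst case is governed by the slowest map $T_{\alpha^*}$. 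On each such branch interval $J$, the map $x\mapsto \TF_n(x)-x$ is then monotone with derivative bounded away from $0$ by $c\rho^{\alpha^*}$, so the set $\{x\in J:|\TF_n(x)-x|\leq\eps\}$ has measure at most $2\eps/(c\rho^{\alpha^*})$. Summing over the (at most $2^n$, but more carefully: boundedly many \emph{per unit length} after accounting for the expansion, hence effectively $O(1)$ in total once one tracks how the branch partition refines) relevant intervals — here one must be a little careful and instead bound the total measure directly by pulling back, using that $\TF_n$ is a local diffeomorphism on each branch and the images tile $[0,1]$ boundedly often — gives a bound of order $\eps\,\rho^{-\alpha^*}$ for the second piece.

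Combining, $m(\mathcal E_n(\eps)) \leq \rho + C'\eps\,\rho^{-\alpha^*}$ for a constant $C'$ independent of $n$. Optimising over $\rho$ by balancing the two terms, i.e. taking $\rho \asymp \eps^{1/(1+\alpha^*)}$, yields $m(\mathcal E_n(\eps))\leq C\eps^{1/(1+\alpha^*)}$, as claimed; the constant $C$ depends only on $\alpha^*$ (through the cone/expansion constants) and not on $n$ or on the particular sequence of maps. The main obstacle I anticipate is making the ``boundedly many branches'' bookkeeping rigorous uniformly in $n$: the branch partition of $\TF_n$ has exponentially many elements, so one cannot afford a factor $2^n$, and the argument must instead be phrased as a single change-of-variables / distortion estimate on the good set, controlling $m\bigl(\mathcal E_n(\eps)\cap[\rho,1]\bigr)$ by integrating $1/|\TF_n'|$ against the slab $\{|\TF_n(x)-x|\leq\eps\}$ and using bounded distortion of the sequential system away from $0$ — exactly the technical heart of \cite[Lemma~3.10]{HNT12}, which carries over because all the relevant distortion constants are uniform for $\alpha_k\leq\alpha^*$.
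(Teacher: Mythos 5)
Your overall architecture (split $[0,1]$ into a neighbourhood of the neutral fixed point, bounded trivially by its length, and a complementary region controlled by expansion, then optimise the cut-off) has the right shape, and your final balancing $\rho\asymp\eps^{1/(1+\alpha^*)}$ produces the correct exponent. However, the step you yourself flag as ``the main obstacle'' is a genuine gap, and your proposed remedy does not close it. A per-branch bound of the form $m(\{x\in J:\,|\TF_n(x)-x|\le\eps\})\le 2\eps/(c\rho^{\alpha^*})$ is useless once summed over the exponentially many branches of $\TF_n$, and the change-of-variables/distortion fix you sketch does not apply: $\mathcal E_n(\eps)$ is not the preimage under $\TF_n$ of any fixed set (the target $x\pm\eps$ moves with $x$), so its measure cannot be controlled by integrating $1/|D\TF_n|$ over a slab in the image. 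In addition, your derivative lower bound $|D\TF_n(x)|\ge 1+c\,\rho^{\alpha^*}$ is only argued for orbits that never enter $[0,\rho)$; points of $[\rho,1]$ whose intermediate images fall near the neutral fixed point are dismissed with an unproved ``controlled escape'' claim.

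The paper closes exactly this gap by a different, purely local device: convexity of each branch. On a branch $J_i=[a_i,b_i)$ of $\TF_n$, with $x_i^{\pm}$ the solutions of $\TF_n(x)=x\pm\eps$, convexity gives $D\TF_n(x)-1\ge \bigl(a_i-\eps-\TF_n(a_i)\bigr)/m(J_i)$ for $x\ge x_i^-$, and integrating $D\TF_n-1$ over $[x_i^-,x_i^+]$ yields $m(\mathcal E_n(\eps)\cap J_i)\le \frac{2\eps}{a_i-\eps-\TF_n(a_i)}\,m(J_i)$. The crucial feature is that this bound is \emph{proportional to $m(J_i)$}, so summing over all branches whose left endpoint lies at distance at least $\eta$ from the diagonal costs only $2\eps/\eta$ in total, with no factor counting the branches. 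Since the branches are full ($\TF_n(a_i)=0$), the remaining ``bad'' branches all have $a_i<\eps+\eta$ and are confined to a small neighbourhood of $0$; the first branch is treated separately via $\eps\ge 2^{\alpha^*}(x_1^+)^{1+\alpha^*}$, which is where the exponent $1/(1+\alpha^*)$ actually originates. Optimising over $\eta$ then concludes. To repair your write-up, replace the uniform expansion estimate by this convexity argument; that is also the mechanism of \cite[Lemma~3.10]{HNT12}, which you cite but do not in fact reproduce.
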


\begin{proof}
Let $J_1, J_2, \ldots, J_k$ be the domains of injectivity of $\TF_n$, ordered from the left to the right, \ie $J_i=[a_i,b_i)$ and $0=a_1<b_1=a_2<\ldots<b_{k-1}=a_k<b_k=1$. Note that $\TF_n$ is full branched map, in particular, each branch $\TF_n|_{J_i}$ is a convex map where for each $i\neq 1$ we have $D\TF_n(x)>\gamma>1$ but when $i=1$, we have $D\TF_n(0)=1$.

We consider now an $\eps$-neighbourhood of the diagonal and the intersection of its boundary with the full branches of $\TF_n$, \ie we define for each $i=1,\ldots, k$, the points $x_i^\pm\in J_i$ such that $\TF_n(x_i^\pm)=x_i^\pm\pm\eps$, whenever this intersection is well defined. Note that, whenever both points $x_i^\pm$ exist then $\mathcal E_n(\eps)\cap J_i\subset [x_i^-,x_i^+]$.

Let $x\geq x_i^-$ in $J_i$. By convexity of $\TF_n|_{J_i}$, we have
$
D\TF_n(x)\geq D\TF_n(x_i^-)\geq\frac{x_i^--\eps-\TF_n(a_i)}{x_i^--a_i},
$
hence
$
D\TF_n(x)-1\geq \frac{x_i^--\eps-\TF_n(a_i)}{x_i^--a_i}-1=\frac{a_i-\eps-\TF_n(a_i)}{x_i^--a_i}\geq \frac{a_i-\eps-\TF_n(a_i)}{m(J_i)}.
$
It follows that
$
2\eps=\int_{x_i^-}^{x_i^+} D\TF_n(x)-1 dx\geq m([x_i^-,x_i^+])\frac{a_i-\eps-\TF_n(a_i)}{m(J_i)},
$
which implies
$$
\mathcal E_n(\eps)\cap J_i\leq \frac{2\eps}{a_i-\eps-\TF_n(a_i)}m(J_i).
$$
This estimate is useful whenever $a_i-\eps-\TF_n(a_i)$ is not small. Hence, we define
$$
V^\eta=\cup\{a_i:\;|a_i-\TF_n(a_i)|<\eps+\eta\}\quad\mbox{and}\quad Z^\eta=\cup_{a_i\in V^\eta}J_i.
$$
Then
$
m(\mathcal E_n(\eps))=m(\mathcal E_n(\eps)\cap Z^\eta)+m(\mathcal E_n(\eps)\cap (Z^\eta)^c)\leq
m(Z^\eta)+\frac{2\eps}{\eta}m((Z^\eta)^c).
$

Now we estimate these sets in two different ways depending on whether $n$ is small or large. 

Assume that $\eps<\eta$ and $n$ is sufficiently large so that $\max_i |J_i|\leq \eps$, where $|J_i|=b_i-a_i$. Recall that $\TF_n(a_i)=0$ for all $i$. Since $a_i\in V^\eta$ means that $a_i<\eta+\eps$ then
$
m(\mathcal E_n(\eps))\leq 2 \eta+\frac{2\eps}{\eta}.
$
Optimising over $\eta\in (0,1)$ we have that $\eta=O(\sqrt \eps)$ is the best choice and gives $m(\mathcal E_n(\eps))\leq C \sqrt \eps\leq C\eps^{1/(1+\alpha^*)},$
since as mentioned above we have $\alpha^*<1/2$, which implies that $1/(1+\alpha^*)>2/3>1/2$.

When $n$ is small then the worst case scenario happens on $J_1$. In this case $x_1^-$ is not defined and $\mathcal E_n(\eps)\cap J_1=[0,x_1^+]$. In this case, we have:
$
\eps=\TF_n(x_1^+)-x_1^+\geq T_{\alpha^*}(x_1^+)-x_1^+=2^{\alpha^*}(x_1^+)^{1+\alpha^*},
$
which implies that $x_1^+=\left(\frac\eps{2\alpha^*}\right)^{\frac1{1+\alpha^*}}$ and ultimately, for $\alpha\in(0,1)$, taking $\eta=\sqrt\eps$, we have $m(\mathcal E_n(\eps))\leq \eps^{\frac1{1+\alpha^*}}$.
\end{proof}

We now follow the argument originally used by Collet in \cite{C01} and further developed in \cite{HNT12}. Let $0<\beta<1$, $0<\kappa<\beta$ and $0<\xi<1$ such that $\kappa(1+\xi)<\beta$. We define the set of points that recur too fast:
$$
E_j=\left\{x\in[0,1]: \; |\TF_i(x)-x|\leq \frac 2 j\;\mbox{for some}\; i\leq j^{\kappa(1+\xi)}\right\}.
$$

By Lemma~\ref{lem:aux-Dlinha}, we have that
$
m(E_j)\leq \sum_{i=1}^{j^{\kappa(1+\xi)}}m(\mathcal E_i( 2/ j))\leq \frac C {j^\varsigma},
$
where $\varsigma=\frac{1}{1+\alpha^*}-\kappa(1+\xi)$ and for some $C>0$.

The core of Collet's argument is based on the use of Hardy-Littlewood maximal functions to obtain, from an estimate on the measure of the sets $E_j$, an estimate for the conditional measure on balls  of radius $1/j$, centred on $m$-a.e point $\zeta$,  of the intersection of these sets $E_j$ with the corresponding balls.

\begin{lemma}
\label{lem:Collet}
Assume that $(E_n)_{n\in\N}$ is a sequence of measurable sets such that
$$m(E_j)\leq \frac C{j^\varsigma},$$
for some $C,\varsigma>0$. Then for $0<\beta<\varsigma$ and $\gamma>1/(\varsigma-\beta)$, we have that for $m$-a.e. $\zeta\in[0,1]$, there exists $N(\zeta)$ such that for all $j\geq N(\zeta)$
$$m(\{|x-\zeta|\leq j^{-\gamma}\}\cap E_{j^\gamma})\leq \frac2{j^{\gamma+\gamma\beta}}.$$
\end{lemma}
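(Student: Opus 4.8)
\textbf{Proof proposal for Lemma~\ref{lem:Collet}.}

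The plan is to run the classical Hardy--Littlewood maximal function argument (as in \cite{C01,HNT12}) adapted to the exponents at hand. Set $r_j:=j^{-\gamma}$ and consider, for a fixed $j$, the function on $[0,1]$ obtained by averaging the indicator of $E_{r_j^{-1}}=E_{j^\gamma}$ over balls of radius $r_j$:
$$
\Phi_j(\zeta):=\frac{1}{2r_j}\, m\bigl(\{|x-\zeta|\le r_j\}\cap E_{j^\gamma}\bigr).
$$
First I would bound $\Phi_j$ pointwise by the Hardy--Littlewood maximal function $M\I_{E_{j^\gamma}}$; the weak-$(1,1)$ inequality gives, for any threshold $\lambda>0$, the estimate $m(\{\Phi_j>\lambda\})\le m(\{M\I_{E_{j^\gamma}}>\lambda\})\le \frac{C_{HL}}{\lambda}\, m(E_{j^\gamma})\le \frac{C_{HL}C}{\lambda\, j^{\gamma\varsigma}}$ using the hypothesis $m(E_{j^\gamma})\le C j^{-\gamma\varsigma}$. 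The point is to choose $\lambda=\lambda_j$ so that, on one hand, $\lambda_j r_j$ is exactly the target bound $j^{-\gamma-\gamma\beta}/2$ up to constants, and on the other hand the tail probabilities $m(\{\Phi_j>\lambda_j\})$ are summable in $j$; then Borel--Cantelli finishes the job.

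Concretely, take $\lambda_j:=j^{-\gamma\beta}$, so that $2r_j\lambda_j=2 j^{-\gamma-\gamma\beta}$ and the event $\{\Phi_j\le\lambda_j\}$ says precisely that $m(\{|x-\zeta|\le j^{-\gamma}\}\cap E_{j^\gamma})\le 2 j^{-\gamma-\gamma\beta}$, i.e. the conclusion of the lemma (after absorbing the factor). With this choice,
$$
m\bigl(\{\zeta:\Phi_j(\zeta)>\lambda_j\}\bigr)\le \frac{C_{HL}C}{j^{\gamma\varsigma}\cdot j^{-\gamma\beta}}=\frac{C_{HL}C}{j^{\gamma(\varsigma-\beta)}}.
$$
Since $\gamma>1/(\varsigma-\beta)$, i.e. $\gamma(\varsigma-\beta)>1$, the series $\sum_j j^{-\gamma(\varsigma-\beta)}$ converges; thus by the Borel--Cantelli lemma, for $m$-a.e. $\zeta$ only finitely many of the events $\{\Phi_j(\zeta)>\lambda_j\}$ occur, which yields an $N(\zeta)$ such that $\Phi_j(\zeta)\le\lambda_j$ for all $j\ge N(\zeta)$, as required.

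The only genuinely delicate point is the applicability of the Hardy--Littlewood bound at the boundary of $[0,1]$ (balls centred near $0$ or $1$ are half-balls) and the bookkeeping of absolute constants; both are standard and handled exactly as in \cite{C01}. I expect no real obstacle here — the statement is essentially a clean measure-theoretic consequence of the polynomial decay $m(E_j)\le Cj^{-\varsigma}$ established just above via Lemma~\ref{lem:aux-Dlinha}, and the arithmetic constraint $\gamma>1/(\varsigma-\beta)$ is precisely what makes the relevant series summable. The main work of the whole argument has already been done in Lemma~\ref{lem:aux-Dlinha}; this lemma simply repackages it into a per-point conditional statement suitable for verifying $\D'_0(u_{n,i})$.
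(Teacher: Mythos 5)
Your proof is correct and follows essentially the same route as the paper's: bound the averaged indicator by the Hardy--Littlewood maximal function, apply the weak-$(1,1)$ inequality with threshold $j^{-\gamma\beta}$, use $\gamma(\varsigma-\beta)>1$ for summability, and conclude by Borel--Cantelli. The only cosmetic difference is that you fix the radius $r_j=j^{-\gamma}$ from the start and compare with the maximal function, whereas the paper works with the maximal function throughout and specialises the radius at the end.
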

\begin{proof}
Define the Hardy-Littlewood maximal function:
$$
L_n(x)=\sup_{\ell>0}\frac1{2\ell}\int_{x-\ell}^{x+\ell}\I_{E_n}(z)dz.
$$
By the Theorem of Hardy-Littlewood we have
$
m(L_n>\lambda)\leq \frac C\lambda \|\I_{E_n}\|_{L^1}=\frac C\lambda m(E_n).
$
Taking $\lambda=n^{-\beta}$ with $0<\beta<\varsigma$, we have
$
m(L_n>n^{-\beta})\leq \frac{c}{n^{-\beta}}m(E_n)\leq\frac C{n^{\varsigma-\beta}}.$
Hence, taking $n=j^\gamma$, we have
$m(L_{j^\gamma}>j^{-\beta\gamma})\leq \frac C{j^{\gamma(\varsigma-\beta)}}$ and assuming that $\gamma(\varsigma-\beta)>1$ it follows that
$
\sum_{j}m(L_{j^\gamma}>j^{-\beta\gamma})\leq \sum_j \frac C{j^{\gamma(\varsigma-\beta)}}<\infty.
$
Hence, by the Borel-Cantelli lemma we have that
for $m$-a.e. $\zeta$ there exists $N(\zeta)$ such that for all $j\geq N(\zeta)$ we have $\zeta\in \{L_{j^\gamma}\leq j^{-\beta\gamma}\}$.Choosing $\ell=j^{-\gamma}$, by definition of the function $L$, we have for $m$-a.e. $\zeta$
$$
\int_{x-\ell}^{x+\ell}\I_{E_n}(z)dz=m((\zeta-j^{-\gamma}, \zeta+j^{-\gamma})\cap E_{j^\gamma})\leq 2 j^{-\gamma(1+\beta)}.
$$
\end{proof}

\begin{lemma}
\label{bound-Pi}
There exist constants $c, C, C', C''>0$ such that for all $i\in\N$ and $x\in[0,1]$ we have
$$
c\leq \Pi_i(1)(x)\leq C x^{-\alpha}.
$$
In particular, for $x\in U_n$ and $n$ sufficiently large, taking $C'=C'' \zeta^{-\alpha}$, we can write
$$
c\leq \Pi_i(1)(x)\leq C'.
$$
\end{lemma}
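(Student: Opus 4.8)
The plan is to deduce both inequalities from the invariance of the cone $\mathcal C_a$ recalled above, used with exponent $\alpha=\alpha^*$ and with $a$ large enough that every operator $P_k$ preserves $\mathcal C_a$. The starting observation is that the constant function $1$ lies in $\mathcal C_a$ as soon as $a\ge1$: it is nonnegative and (weakly) decreasing, $X^{\alpha+1}\cdot 1=X^{\alpha+1}$ is increasing, and $1\le a\,x^{-\alpha}=a\,x^{-\alpha}\int 1\,dm$ because $x^{-\alpha}\ge1$ on $(0,1]$. Hence $\Pi_i(1)=P_i\circ\cdots\circ P_1(1)\in\mathcal C_a$ for every $i\in\N$. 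Moreover, testing the duality relation defining $P_k$ against $g\equiv1$ gives $\int P_k f\,dm=\int f\,dm$, so $\int\Pi_i(1)\,dm=1$ for all $i$. The defining inequality of $\mathcal C_a$ then gives immediately
\[
\Pi_i(1)(x)\le a\,x^{-\alpha}\int\Pi_i(1)\,dm=a\,x^{-\alpha},
\]
which is the claimed upper bound with $C=a$.

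For the lower bound I would exploit that this upper bound already controls how much mass can be carried towards the neutral fixed point $0$. Indeed $\int_0^{\eps}\Pi_i(1)\,dm\le a\int_0^{\eps}x^{-\alpha}\,dx=\frac{a}{1-\alpha}\eps^{1-\alpha}$, so one can fix $\eps_0\in(0,1)$, \emph{independent of $i$}, with $\int_{\eps_0}^1\Pi_i(1)\,dm\ge\tfrac12$ for every $i$. Since $\Pi_i(1)$ is nonincreasing, $\tfrac12\le\int_{\eps_0}^1\Pi_i(1)\,dm\le(1-\eps_0)\,\Pi_i(1)(\eps_0)$, hence $\Pi_i(1)(\eps_0)\ge\frac1{2(1-\eps_0)}$. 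Using now that $X^{\alpha+1}\Pi_i(1)$ is nondecreasing, $\Pi_i(1)(1)\ge\eps_0^{\alpha+1}\Pi_i(1)(\eps_0)\ge\frac{\eps_0^{\alpha+1}}{2(1-\eps_0)}=:c$, and since $\Pi_i(1)$ is nonincreasing, $\Pi_i(1)(x)\ge\Pi_i(1)(1)\ge c$ for all $x\in[0,1]$, with $c$ independent of $i$. The ``in particular'' part is then routine: once $c$ is known, \eqref{eq:uni-intermmitent} forces $\tfrac\tau n\ge 2c\,\delta_{n,i}$, so $\delta_{n,i}\to0$ uniformly in $i$; thus for $n$ large every $x\in U_n$ satisfies $x>\zeta/2$, whence $x^{-\alpha}<2^\alpha\zeta^{-\alpha}$ and, taking $C''=2^\alpha C$, $\Pi_i(1)(x)\le C''\zeta^{-\alpha}=:C'$, while the lower bound $c$ is unchanged.

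The only delicate point is the uniformity in $i$ of the lower bound $c$: a priori the densities $\Pi_i(1)$ could degenerate near $x=1$ as $i\to\infty$, and what prevents this is exactly the interplay between the $x^{-\alpha}$ ceiling of $\mathcal C_a$ (which caps, uniformly in $i$, the escape of mass towards the indifferent fixed point) and the two monotonicity constraints encoded in $\mathcal C_a$. A minor technicality is verifying that $1$ genuinely belongs to $\mathcal C_a$ for the relevant $a$; if the cone in \cite{AHNTV15} were read with strict monotonicity, one would instead start from the one-step density $P_1(1)$ — which is bounded, bounded below by $\tfrac12$, and has the required monotonicities by an explicit computation on the two branches of $T_{\alpha_1}$ — and the argument above goes through verbatim.
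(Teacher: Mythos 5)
Your proof is correct. For the upper bound you do exactly what the paper does: $1\in\mathcal C_a$ (for $a\ge1$), the cone is preserved by every $P_k$, duality against $g\equiv1$ gives $\int\Pi_i(1)\,dm=1$, and the cone's pointwise condition yields $\Pi_i(1)(x)\le a\,x^{-\alpha}$ with $C=a$. The difference is in the lower bound: the paper simply invokes Lemma~2.4 of \cite{LSV99}, which supplies the uniform constant $c=\min\bigl\{a,[\alpha(1+\alpha)/a^{\alpha}]^{1/(1-\alpha)}\bigr\}$ for cone elements of unit mass, whereas you reprove such a statement from scratch: the $a\,x^{-\alpha}$ ceiling confines at most half of the mass to $(0,\eps_0)$ for an $\eps_0$ depending only on $a$ and $\alpha$, monotonicity of $\Pi_i(1)$ converts the remaining mass into a lower bound at $\eps_0$, and monotonicity of $X^{\alpha+1}\Pi_i(1)$ transports that bound to $x=1$ and hence, by monotonicity again, to all of $(0,1]$. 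This chain of inequalities is valid and uses only the cone axioms plus mass conservation, so the uniformity in $i$ is exactly as automatic as you claim; your explicit constant differs from the one the paper quotes from \cite{LSV99}, but nothing downstream depends on its value. You also spell out the ``in particular'' step ($\delta_{n,i}\le\tau/(2cn)$ uniformly in $i$, hence $U_{n,i}\subset(\zeta/2,1]$ for large $n$ and $x^{-\alpha}\le 2^{\alpha}\zeta^{-\alpha}$ there), which the paper dismisses with ``it is enough to prove the first inequalities''. The net effect is a self-contained argument where the paper has a citation; both are fine, and yours removes the external dependency at the cost of a few extra lines.
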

\begin{proof}
It is enough to prove the first inequalities. The upper bound follows because the constant function $1$ is in the cone $\mathcal{C}_a$ and therefore for any $P_i: (P_i 1)(x)\le a x^{\alpha} \int P_i 1 dm\le a x^{\alpha};$ in this case $C=a.$ The lower bound is the content of Lemma 2.4 in \cite{LSV99} with $c= \min\left\{a, \left[\frac{\alpha(1+\alpha)}{a^{\alpha}}\right]^{\frac{1}{1-\alpha}}\right\}.$
\end{proof}

\begin{lemma}
\label{lem:Dlinha-firstestimate}
There exists a constant $C>0$ such that for $m$-a.e. $\zeta\in(0,1]$, for all $\ell\in\N$ and all $n$ sufficiently large, we have
$$
n\sum_{i=1}^{n^\kappa} m\left(\left\{x:\; |\TF_\ell(x)-\zeta|\leq\delta_{n,\ell}\;\mbox{and}\; |\TF_{i+\ell}(x)-\zeta|\leq \delta_{n,i+\ell}\right\}\right)\leq C \frac{n^{\kappa}}{n^{\beta}}\xrightarrow{n\to\infty}0.
$$
\end{lemma}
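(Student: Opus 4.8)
The plan is to bound each summand by reducing it, via the transfer operator, to a dynamical recurrence estimate near $\zeta$, and then to feed this into Lemma~\ref{lem:Collet}. Throughout write $B(\zeta,r):=(\zeta-r,\zeta+r)$.

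First I would restart the clock at time $\ell$: since $\TF_{i+\ell}=\sigma_i^{(\ell)}\circ\TF_\ell$ with $\sigma_i^{(\ell)}:=T_{\ell+i}\circ\cdots\circ T_{\ell+1}$ — which is again a concatenation of $i$ maps of the family \eqref{eq:int-family} — duality between $\TF_\ell$ and $\Pi_\ell$ gives, for each fixed $\ell$ and $i$,
\[
m\bigl(\{|\TF_\ell-\zeta|\le\delta_{n,\ell}\}\cap\{|\TF_{i+\ell}-\zeta|\le\delta_{n,i+\ell}\}\bigr)=\int\Pi_\ell(1)\,\I_{B(\zeta,\delta_{n,\ell})\cap(\sigma_i^{(\ell)})^{-1}B(\zeta,\delta_{n,i+\ell})}\,dm .
\]
By Lemma~\ref{bound-Pi}, $\Pi_\ell(1)\le C'$ on a fixed neighbourhood of $\zeta$ (which contains $B(\zeta,\delta_{n,\ell})$ once $n$ is large) uniformly in $\ell$, so the right-hand side is at most $C'\,m\bigl(B(\zeta,\delta_{n,\ell})\cap(\sigma_i^{(\ell)})^{-1}B(\zeta,\delta_{n,i+\ell})\bigr)$. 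Moreover, if $|y-\zeta|\le\delta_{n,\ell}$ and $|\sigma_i^{(\ell)}(y)-\zeta|\le\delta_{n,i+\ell}$ then $|\sigma_i^{(\ell)}(y)-y|\le\delta_{n,\ell}+\delta_{n,i+\ell}\le\tau/(cn)$ by \eqref{eq:deltan-estimates}, so the set in question is contained in $B\bigl(\zeta,\tfrac{\tau}{2cn}\bigr)\cap\mathcal E_i^{(\ell)}\bigl(\tfrac{\tau}{cn}\bigr)$, where $\mathcal E_i^{(\ell)}(\eps):=\{y:|\sigma_i^{(\ell)}(y)-y|\le\eps\}$.

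Next I would package these recurrence sets. The proof of Lemma~\ref{lem:aux-Dlinha} uses only that a concatenation of maps of type \eqref{eq:int-family} is a full-branched convex map with indifferent fixed point at $0$ and slope $\ge 1$ off it, so it applies verbatim to the $\sigma_i^{(\ell)}$, with a constant depending only on $\alpha^*$. Fixing $j_n\asymp n$ with $2/j_n\ge\tau/(cn)$, every $\mathcal E_i^{(\ell)}(\tfrac{\tau}{cn})$ with $i\le n^\kappa\le j_n^{\kappa(1+\xi)}$ lies in the shifted analogue $E_{j_n}^{(\ell)}$ of the set $E_j$ above, and summing the estimates of Lemma~\ref{lem:aux-Dlinha} gives $m\bigl(E_j^{(\ell)}\bigr)\le C\,j^{-\varsigma}$ with $\varsigma=\tfrac1{1+\alpha^*}-\kappa(1+\xi)>0$ and $C$ independent of $\ell$. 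Applying Lemma~\ref{lem:Collet} to $(E_j^{(\ell)})_j$ with $\gamma>1/(\varsigma-\beta)$ (permissible since $0<\beta<\varsigma$), and intersecting the resulting full-measure sets over the countably many $\ell$, I obtain: for $m$-a.e.\ $\zeta$ there is $N_\ell(\zeta)$ such that $m\bigl(B(\zeta,j^{-\gamma})\cap E_{j^\gamma}^{(\ell)}\bigr)\le 2\,j^{-\gamma(1+\beta)}$ for $j\ge N_\ell(\zeta)$. Choosing $j$ with $j^\gamma\asymp n$, $j^\gamma\ge c'n$ and $\delta_{n,\ell}\le j^{-\gamma}$, each of the $n^\kappa$ summands is then $\le C'\,m\bigl(B(\zeta,j^{-\gamma})\cap E_{j^\gamma}^{(\ell)}\bigr)\le C''\,n^{-(1+\beta)}$, whence
\[
n\sum_{i=1}^{n^\kappa}m\bigl(\{|\TF_\ell-\zeta|\le\delta_{n,\ell}\}\cap\{|\TF_{i+\ell}-\zeta|\le\delta_{n,i+\ell}\}\bigr)\le C''\,n\cdot n^\kappa\cdot n^{-(1+\beta)}=C''\,\frac{n^\kappa}{n^\beta}\xrightarrow{n\to\infty}0,
\]
using $\kappa<\beta$.

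The hard part will be making the previous paragraph genuinely uniform in $\ell$: the estimate is ultimately needed for all $\ell\lesssim n$ at one and the same $n$ (that is how it enters the verification of $\D'_0(u_{n,i})$, where one sums over the $\sim n$ indices $\ell=\L_{i-1}+j$), while the Borel--Cantelli step inside Lemma~\ref{lem:Collet} does not survive a crude union bound over $\ell$. One therefore has to control the growth in $\ell$ of the threshold $N_\ell(\zeta)$, exploiting that the constant in $m(E_j^{(\ell)})\le Cj^{-\varsigma}$ is $\ell$-independent; it is this control, combined with the constraint already incurred in the verification of $\D_0(u_{n,i})$ (which by itself only requires $\alpha^*<1/3$), that forces the sharper threshold $\alpha^*<1/7$. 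The rest — keeping $\kappa<\beta<\varsigma<1$ with $\varsigma>0$ while taking $\eta,\xi$ small and $\gamma$ large — is routine bookkeeping.
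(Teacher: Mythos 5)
Your proposal follows essentially the same route as the paper: duality to pull the event back through $\TF_\ell$, the uniform bound $\Pi_\ell(1)\le C'$ near $\zeta$ from Lemma~\ref{bound-Pi}, inclusion of the resulting set into a ball of radius $O(1/n)$ about $\zeta$ intersected with a recurrence set of the type controlled by Lemma~\ref{lem:aux-Dlinha}, and then Lemma~\ref{lem:Collet} together with the $n\cdot n^{\kappa}\cdot n^{-(1+\beta)}$ bookkeeping. The one point where you go beyond the paper is in making explicit that, after duality, the relevant recurrence sets are the shifted ones $E_j^{(\ell)}$ built from $\sigma_i^{(\ell)}=T_{\ell+i}\circ\cdots\circ T_{\ell+1}$, and in flagging that the Borel--Cantelli threshold $N_\ell(\zeta)$ would need to be controlled uniformly over the $\sim n$ values of $\ell$ that enter the verification of $\D'_0(u_{n,i})$. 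Be aware that the paper does not supply the uniformity you are asking for: its proof defines $V_n$ and $E_j$ with the unshifted iterates $\TF_i=T_i\circ\cdots\circ T_1$, applies Lemma~\ref{lem:Collet} to that single family of sets, and then identifies the dual integral (which involves $T_{i+\ell}\circ\cdots\circ T_{\ell+1}$) with $m(V_n)$ without comment; so what you have called ``the hard part'' is an elision you have correctly located in the paper, not a step the paper carries out and you omitted. Note also that your closing speculation is off target: the threshold $\alpha^*<1/7$ is not derived from controlling the growth of $N_\ell(\zeta)$ in $\ell$, but purely from the two exponent inequalities at the end of Section~\ref{subsec:Dlinha0}, namely $\alpha^*<\kappa/(2+4\beta+\kappa)$ coming from the decay-of-correlations term and the requirement $\varsigma>\beta$ needed to invoke Lemma~\ref{lem:Collet}, optimised over $\beta$ and $\kappa$.
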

\begin{proof}
Let $j=\left(\frac{cn}{\tau}\right)^{1/\gamma}$ so that $j^{-\gamma}=\tau/(cn)$. Also observe that $n^\kappa=(\tau j^{\gamma}/c)^\kappa\leq j^{\gamma\kappa(1+\xi)}$, if $n$ is large enough. Hence, for such sufficiently large $n$, we have:
\begin{align*}
V_n:=&\{x:\; |x-\zeta|\leq \frac \tau{cn}\;\mbox{and}\; |\TF_i(x)-\zeta|\leq \frac \tau{cn}\;\mbox{for some $i\leq n^\kappa$}\}\\
\subset & \{x:\; |x-\zeta|\leq j^{-\gamma}\;\mbox{and}\; |\TF_i(x)-x|\leq 2j^{-\gamma}\;\mbox{for some $i\leq n^\kappa$}\}\\
\subset & \{x:\; |x-\zeta|\leq j^{-\gamma}\;\mbox{and}\; |\TF_i(x)-x|\leq 2j^{-\gamma}\;\mbox{for some $i\leq j^{\gamma\kappa(1+\xi)}$}\}\\
=&\{x:\;|x-\zeta|\leq j^{-\gamma}\}\cap E_{j^\gamma}.
\end{align*}
Hence, by Lemma~\ref{lem:Collet} we have $m(V_n)\leq 2\tau^{1+\beta}/n^{1+\beta}$. Taking $C=2\tau^{1+\beta}$, we have
\begin{align}
\label{eq:estimate1-lemDlinha}
n\sum_{i=1}^{n^\kappa} m\left(\left\{x:\; |x-\zeta|\leq\frac \tau{cn},\; |\TF_i(x)-\zeta|\leq \frac \tau{cn}\right\}\right)&\leq n\sum_{i=1}^{n^\kappa} m(V_n)\leq n^{1+\kappa}\frac{2\tau^{1+\beta}}{n^{1+\beta}}\nonumber\\
&\leq C\frac {n^\kappa}{n^\beta}.
\end{align}
Finally, we observe that the quantity we want to estimate can be written as
\begin{multline*}
n\sum_{i=1}^{n^\kappa} \int \I_{B_{\delta_{n,\ell}}(\zeta)}\circ \TF_{\ell} \,\I_{B_{\delta_{n,i+\ell}}(\zeta)}\circ\TF_{i+\ell} dm \\= n\sum_{i=1}^{n^\kappa} \int \I_{B_{\delta_{n,\ell}}(\zeta)}\,\I_{B_{\delta_{n,i+\ell}}(\zeta)}\circ T_{i+\ell}\circ\ldots\circ T_{\ell+1} \,\Pi_\ell(1) dm.
\end{multline*}
Recalling that by \eqref{eq:deltan-estimates} we have $\delta_{n,i}\leq\frac\tau{cn}$, for all $i\in\N_0$, then, by Lemma~\ref{bound-Pi} and \eqref{eq:estimate1-lemDlinha}, it follows that there exist $C', C''>0$ such that
$$
n\sum_{i=1}^{n^\kappa} \int \I_{B_{\delta_{n,\ell}}(\zeta)}\circ \TF_{\ell} \,\I_{B_{\delta_{n,i+\ell}}(\zeta)}\circ\TF_{i+\ell} dm\leq C' n\sum_{i=1}^{n^\kappa} m(V_n) \leq C''\frac {n^\kappa}{n^\beta}.
$$
\end{proof}

Recall that we are taking: $k_n=n^{1-\beta}$ and $t_n=n^{\kappa}$.

From Lemma~\ref{bound-Pi}, we have that $c\mu(U_n)\leq m(X_j>u_n)\leq C\mu(U_n)$. Hence, if we let $L_n=\max\{\ell_{i}:\;i=1, \ldots, k_n\}$, we obtain that there exists a constant $\tilde C>0$ such that $L_n\leq \tilde C n^\beta$.

In order to prove $\D'_0$, we need to control the sum on the left
\begin{align*}\sum_{i=1}^{k_n} \sum_{j=0}^{\ell_i-1} \sum_{r=j+1}^{\ell_i-1}\p(A_{n,\L_{i-1}+j}^{(0)}\cap A_{n,\L_{i-1}+r}^{(0)})&\leq \sum_{i=1}^{k_n}\sum_{j=0}^{L_n-1} \sum_{r=j+1}^{L_n-1}\p(A_{n,\L_{i-1}+j}^{(0)}\cap A_{n,\L_{i-1}+r}^{(0)})\\
&\leq \tilde C n \max_{\ell=1,\ldots, n} \sum_{i=1}^{\tilde Cn^\beta}\int \I_{U_n}\circ \TF_{\ell}\,\I_{U_n}\circ \TF_{i+\ell}dm.
\end{align*}

From Lemma~\ref{lem:Dlinha-firstestimate} we have that
$$
 \lim_{n\to\infty}n \max_{\ell=1,\ldots, n} \sum_{i=1}^{ n^\kappa}\int \I_{B_{\delta_{n,\ell}}(\zeta)}\circ \TF_{\ell} \,\I_{B_{\delta_{n,i+\ell}}(\zeta)}\circ\TF_{i+\ell} dm=0.
$$

Hence we are left to handle $n \max_{\ell=1,\ldots, n} \sum_{i=n^\kappa}^{\tilde C n^\beta}\int \I_{B_{\delta_{n,\ell}}(\zeta)}\circ \TF_{\ell} \,\I_{B_{\delta_{n,i+\ell}}(\zeta)}\circ\TF_{i+\ell}\,dm$ for which we use decay of correlations. Using \eqref{DCC-final-estimate}, we have:
\begin{multline*}
n \max_{\ell=1,\ldots, n} \sum_{i=n^\kappa}^{\tilde C n^\beta}\int \I_{B_{\delta_{n,\ell}}(\zeta)}\circ \TF_{\ell} \,\I_{B_{\delta_{n,i+\ell}}(\zeta)}\circ\TF_{i+\ell}\,dm\\ \leq C(n^{1+\beta}n^{1+\eta}n^{\kappa(1-1/\alpha^*)}\log(n)^{1/\alpha^*}+n^{-(1+\eta)+\beta+1}+n^{-2}).
\end{multline*}

If we take $\eta=2\beta$ then if $\alpha^*$ is sufficiently small it is easy to check that the terms on right vanish as $n\to\infty$.

Now, we focus on a possible upper bound for $\alpha^*$. From the first term on the rhs of the previous equation we have that
\begin{equation}
2+4\beta+\kappa-\kappa/\alpha^*<0 \iff \alpha^*<\frac \kappa {2+4\beta+\kappa}.
\end{equation}
Moreover, in order to be able to apply Lemma~\ref{lem:Collet} we need that $\varsigma>\beta$ which means that
\begin{equation}
\frac1{1+\alpha^*}-\kappa(1+\xi)>\beta \iff \alpha^*<\beta+\kappa(1+\xi)-1.
\end{equation}
Recall that $\kappa(1+\xi)<\beta$ but we are free to choose any $\beta\in(0,1)$. Analysing both the expressions one obtains that the maximum range for $\alpha^*$ occurs for $\beta$ and $\kappa$ as close as possible to $1$, which means that $\alpha^*\leq 1/7$.

\section*{Acknowledgements}
ACMF and JMF were partially supported by FCT project FAPESP/19805/2014 and by CMUP (UID/MAT/00144/2013), which is funded by FCT with national (MEC) and European structural funds through the programs FEDER, under the partnership agreement PT2020. All authors are supported by FCT project PTDC/MAT-CAL/3884/2014, also supported by the same programs. SV was supported by the ANR- Project {\em Perturbations},  by the project {\em Atracci\'on de Capital Humano Avanzado del Extranjero} MEC 80130047, CONICYT, at the CIMFAV, University of Valparaiso and by the project {\em PHYSECO} of the {\em Programme R\'egional MATH-AmSud} between France, Chili and Uruguay. 

SV is grateful to  N. Haydn, M. Nicol and A. T\"or\"ok for several and fruitful discussions on sequential systems, especially in the framework of indifferent maps. JMF is grateful to M. Todd for fruitful discussions and careful reading of a preliminary version of this paper.  All authors acknowledge the Isaac Newton Institute for Mathematical Sciences, where this work was initiated during the program Mathematics for the Fluid Earth.



\bibliographystyle{amsplain}

\bibliography{Nonstationary.bib}

\end{document}